\documentclass[11pt,reqno,a4paper]{amsart}

\usepackage[utf8]{inputenc}
\usepackage[T1]{fontenc}
\usepackage{amsmath,amssymb,amsthm,amsfonts}
\usepackage{mathtools}
\usepackage{microtype}
\usepackage{booktabs}
\usepackage[hmargin=1in,vmargin=1in]{geometry}
\usepackage[hidelinks]{hyperref}
\usepackage{url}

\hypersetup{
  pdftitle  = {A resolution of Erdos Problem No. 190
               via Erdos-Lovasz, BCT, and Baker-Harman-Pintz},
  pdfauthor = {Ji Ho Bae},
  pdfsubject= {Erdos Problem No. 190: canonical Ramsey function H(k)},
  pdfkeywords = {van der Waerden number, arithmetic progression, canonical
                 Ramsey theorem, anti-Ramsey, rainbow, Lovasz Local Lemma,
                 BCT recurrence, Baker-Harman-Pintz prime gap}
}

\theoremstyle{plain}
\newtheorem{theorem}{Theorem}[section]
\newtheorem{lemma}[theorem]{Lemma}

\newtheorem{corollary}[theorem]{Corollary}

\theoremstyle{definition}

\theoremstyle{remark}
\newtheorem{remark}[theorem]{Remark}

\numberwithin{equation}{section}

\DeclareMathOperator{\aw}{aw}

\title[Erd\H{o}s Problem \#190]{A resolution of
Erd\H{o}s Problem \#190 via Erd\H{o}s--Lov\'asz, BCT, and Baker--Harman--Pintz}

\author{Ji Ho Bae}
\address{JRTI}
\email{jihobae@snu.ac.kr}

\date{2026}

\subjclass[2020]{Primary 05D10; Secondary 11B25, 05C65, 11N05}

\keywords{van der Waerden number, arithmetic progression, canonical
Ramsey theorem, anti-Ramsey, rainbow, Lov\'asz Local Lemma, BCT
recurrence, Baker--Harman--Pintz prime gap}

\begin{document}

\begin{abstract}
Let $H(k)$ be the smallest $N$ such that every finite coloring of
$[N]$ contains a monochromatic or rainbow $k$-term arithmetic
progression. Erd\H{o}s and Graham~\cite{ErdosGraham1980} asked
whether $H(k)^{1/k}/k\to\infty$; see also Problem~\#190 in the
Erd\H{o}s Problems database~\cite{Bloom2023}. We prove that there
is an absolute constant $k_{0}\ge 2$ such that for all $k\ge k_{0}$,
\[
\frac{H(k)^{1/k}}{k}\;\ge\;\Bigl(\frac{1}{e}-\varepsilon(k)\Bigr)\frac{k}{\log k},
\qquad
\varepsilon(k)=O\bigl(k^{-0.475}\log k\bigr)\to 0\text{ as }k\to\infty;
\]
in particular $H(k)^{1/k}/k=\Omega(k/\log k)$ and
$\lim_{k\to\infty}H(k)^{1/k}/k=\infty$, resolving the positive
direction of the Erd\H{o}s--Graham question. The argument combines
three standard ingredients --- the symmetric Lov\'asz Local Lemma
applied to the $k$-AP hypergraph on
$[N]$~\cite{ErdosLovasz1975,AlonSpencer2016}, the restricted form of
the Blankenship--Cummings--Taranchuk
recurrence~\cite{BCT}, and the Baker--Harman--Pintz prime-gap
theorem~\cite{BHP} --- together with the pigeonhole reduction
$H(k)\ge W(k-1,k)$ (a special case of the JLMR anti-Ramsey
framework~\cite{JLMR,BEHH2016}), and uses BHP as the only analytic
black box. Previous applications of Erd\H{o}s--Lov\'asz had
fixed~$r$; the improvement here is that the $r^{k-1}$ base dominates
once one allows the color count $r_0=\lfloor k/\log k\rfloor$ to grow
with $k$. No matching upper bound on $H(k)^{1/k}/k$ is known.
\end{abstract}

\maketitle

\section{Introduction and main theorem}
\label{sec:intro}

\subsection{Background}
\label{subsec:background}

Erd\H{o}s and Graham~\cite{ErdosGraham1980} (see also Problem~\#190
in the Erd\H{o}s Problems database~\cite{Bloom2023}) asked whether
the canonical Ramsey function $H(k)$ satisfies $H(k)^{1/k}/k\to\infty$.
Natural lower bounds on $H(k)$ come from lower bounds on van der
Waerden numbers $W(r,k)$:
\begin{itemize}
\item \textbf{Berlekamp 1968}~\cite{Berlekamp1968} (two colors):
  $W(2,k)>(k-1)\cdot 2^{k-1}$ for $k-1$ prime.
\item \textbf{Erd\H{o}s--Lov\'asz 1975}~\cite{ErdosLovasz1975} (any $r$):
  $W(r,k)\gg r^{k-1}/k$.
\item \textbf{Hunter 2025}~\cite{Hunter2025} (fixed $r$):
  $W(r,k)>(a\cdot 3^{b})^{(1-o_{r}(1))k}$ with $r=a+3b$, $a\in\{2,3,4\}$.
\end{itemize}
Composing any of these \emph{at a fixed $r$} with the pigeonhole
reduction $H(k)\ge W(k-1,k)$ and a BCT-type recurrence gives at most
a bounded ratio $H(k)^{1/k}/k$; see Section~\ref{sec:remarks} for the
Berlekamp case (ratio $\to 2$) and the Hunter case (conditionally
$(\log k)^{1/2-o(1)}$ under a uniform extension not proved
in~\cite{Hunter2025}).

We use instead the Erd\H{o}s--Lov\'asz bound \emph{at a growing $r$},
namely $r_{0}=\lfloor k/\log k\rfloor$. At this color
count, the Erd\H{o}s--Lov\'asz base $r_{0}^{k-1}/(8k)$ is
super-exponentially large in $k$, and BCT iteration up to
$p^{*}\le k-1$ via BHP supplies the remaining factor. The final rate
is polynomial in $k$.

\subsection{Main theorem}
\label{subsec:main}

\begin{theorem}[Main theorem]
\label{thm:main}
There is an absolute constant $k_{0}\ge 2$ such that for all $k\ge k_{0}$,
\begin{equation}
\frac{H(k)^{1/k}}{k}\;\ge\;\Bigl(\frac{1}{e}-\varepsilon(k)\Bigr)\frac{k}{\log k},
\label{eq:main}
\end{equation}
where $\varepsilon(k)=O(k^{-0.475}\log k)\to 0$ as $k\to\infty$.
\end{theorem}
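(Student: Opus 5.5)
The plan is to chain three lower bounds: the pigeonhole reduction $H(k)\ge W(k-1,k)$, a restricted BCT recurrence that removes one colour at a time while gaining a factor of $p$ per step, and the Erd\H{o}s--Lov\'asz bound at the growing colour count $r_{0}=\lfloor k/\log k\rfloor$. First, any colouring of $[N]$ with $k-1$ colours has no rainbow $k$-AP, since a rainbow $k$-AP needs $k$ distinct colours. So a $(k-1)$-colouring of $[W(k-1,k)-1]$ with no monochromatic $k$-AP witnesses $H(k)>W(k-1,k)-1$.

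Second, I would establish the base case. Colour $[N]$ uniformly at random with $r_{0}$ colours and let $A_{P}$ be the event that the $k$-AP $P$ is monochromatic, so $\Pr(A_{P})=r_{0}^{1-k}$. Each $k$-AP meets at most $k\cdot kN/(k-1)\le 2kN$ others, since a point lies in at most about $kN/(k-1)$ $k$-APs. The symmetric Local Lemma then succeeds whenever $e\,r_{0}^{1-k}(2kN+1)\le 1$, which gives $W(r_{0},k)\ge r_{0}^{k-1}/(8k)$.

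Third, I would apply the restricted BCT recurrence. I take it in the form $W(r,k)>p\,(W(r-1,k)-1)$, valid for a prime $p$ with $r\le p\le k-1$ (the restricted range of~\cite{BCT}). BHP supplies a prime $p^{*}\in[\,k-1-(k-1)^{0.525},\,k-1\,]$ for $k\ge k_{0}$. Iterating from $r=k-1$ down to $r_{0}$ takes $k-1-r_{0}$ steps, each legal because $r\le k-1$ and $p^{*}$ is fixed, and the $-1$'s are absorbed harmlessly since $W$ is huge. This yields
\[
H(k)\;\ge\;W(k-1,k)\;\ge\;\tfrac12\,(p^{*})^{\,k-1-r_{0}}\cdot\frac{r_{0}^{\,k-1}}{8k}.
\]

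Finally, I would take $k$-th roots. The factor $(p^{*})^{-r_{0}/k}$ equals $\exp(-(r_{0}/k)\log p^{*})\ge e^{-1}(1-O(1/\log k))$, because $r_{0}\log k\le k$; this is exactly where the constant $1/e$ comes from. We also have $p^{*}/k\ge 1-O(k^{-0.475})$ and $r_{0}\ge (k/\log k)(1-O(\log k/k))$. The stray factors $(p^{*}r_{0})^{-1/k}$ and $(16k)^{-1/k}$ are $1-O(\log k/k)$. Multiplying these gives $H(k)^{1/k}\ge(1/e-O(k^{-0.475}\log k))\,k^{2}/\log k$, which after dividing by $k$ is \eqref{eq:main}.

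The main obstacle is the third step: making sure the restricted BCT recurrence really applies with one fixed prime $p^{*}\approx k$ across all $k-1-r_{0}$ colour reductions, and in exactly the one-colour-per-step form. If the available version only reduces $r$ to $r-\lceil r/p\rceil$, the count of steps and the gain per step must be redone. That is also where BHP's gap exponent $0.525$ enters the error term $\varepsilon(k)$.
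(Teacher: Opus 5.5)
There is a genuine gap in your third step. By your own stated hypothesis, the restricted recurrence $W(r,k)>p\,(W(r-1,k)-1)$ requires $r\le p$, not merely $r\le k-1$. The reason is in the blow-up. When a putative monochromatic $k$-AP has the new colour and $p\nmid d$, the within-block offsets of its first $p$ terms run over all of $\{0,\dots,p-1\}$. The contradiction comes from one of them landing outside the reserved range $\{0,\dots,r-2\}$, and that needs $r\le p$.

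With $p=p^{*}$ fixed, the steps $r=k-1,k-2,\dots,p^{*}+1$ are therefore not licensed. You cannot rescue them by switching primes: for $r$ above the largest prime $q\le k$ there is no prime in $[r,k]$ at all, and BHP only places $p^{*}$ within $(k-1)^{0.525}$ of $k-1$. So the inequality $W(k-1,k)\ge\tfrac12(p^{*})^{k-1-r_{0}}r_{0}^{k-1}/(8k)$ is not established. The obstacle you flag at the end is real, but it is this range condition, not the one-colour-per-step form.

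The repair is the one the paper uses. Run the recurrence only for $r=r_{0}+1,\dots,p^{*}$, which is legal since $r_{0}<p^{*}$ for $k\ge k_{0}$. This gives $W(p^{*},k)-1\ge(p^{*})^{p^{*}-r_{0}}(W(r_{0},k)-1)$. Then use monotonicity $W(k-1,k)\ge W(p^{*},k)$, since a $p^{*}$-colouring is also a $(k-1)$-colouring. This loses a factor $(p^{*})^{k-1-p^{*}}\le k^{(k-1)^{0.525}}$, whose $k$-th root is at most $\exp(k^{-0.475}\log k)$. That loss is exactly where the $\log k$ in $\varepsilon(k)=O(k^{-0.475}\log k)$ comes from; your invalid step count would have given $O(k^{-0.475})$.

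A smaller point: you do not lose a factor $1-O(1/\log k)$ in $(p^{*})^{-r_{0}/k}$. Since $r_{0}\le k/\log k$ and $p^{*}\le k$, the exponent satisfies $(r_{0}/k)\log p^{*}\le 1$, so the factor is at least $e^{-1}$ exactly. Taken literally, an $O(1/\log k)$ loss there would degrade $\varepsilon(k)$ to $O(1/\log k)$, contradicting the rate you claim.

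With these two corrections your argument coincides with the paper's.
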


\begin{corollary}[Resolution of Erd\H{o}s Problem \#190]
\label{cor:p190}
\[
\lim_{k\to\infty}\frac{H(k)^{1/k}}{k}\;=\;\infty.
\]
\end{corollary}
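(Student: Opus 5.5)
The corollary follows directly from Theorem~\ref{thm:main}. The only work is to check that the right-hand side of \eqref{eq:main} tends to infinity, and that a lower bound tending to infinity gives a genuine limit rather than just an unbounded $\liminf$.

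\textbf{Step 1.} Since $\varepsilon(k)=O(k^{-0.475}\log k)$, fix constants $C>0$ and $k_1\ge k_0$ with $|\varepsilon(k)|\le Ck^{-0.475}\log k$ for all $k\ge k_1$. Because $k^{-0.475}\log k\to 0$, we may enlarge $k_1$ so that $Ck^{-0.475}\log k\le \frac{1}{2e}$ for $k\ge k_1$. Then
\[
\frac{1}{e}-\varepsilon(k)\;\ge\;\frac{1}{2e}>0 \qquad (k\ge k_1).
\]
This step matters: it ensures the lower bound in \eqref{eq:main} is positive and of order $k/\log k$.

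\textbf{Step 2.} For $k\ge k_1$, Theorem~\ref{thm:main} now gives
\[
\frac{H(k)^{1/k}}{k}\;\ge\;\frac{1}{2e}\cdot\frac{k}{\log k}.
\]
The function $k/\log k$ is increasing for $k\ge 3$ and tends to infinity. So for every $M>0$ there is $K(M)$ with $H(k)^{1/k}/k\ge M$ for all $k\ge K(M)$.

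\textbf{Step 3.} That is exactly the definition of $\lim_{k\to\infty}H(k)^{1/k}/k=\infty$, not just $\limsup=\infty$, since the bound holds for all sufficiently large $k$ and not along a subsequence. Here $H(k)$ is finite for each $k$ by the canonical van der Waerden theorem, so the quantity is well defined; in any case a lower bound is all that is needed.

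There is no real obstacle at the level of the corollary; all the difficulty lies in Theorem~\ref{thm:main}. The one point to state explicitly is the uniformity: the implied constant in $\varepsilon(k)$ and the threshold $k_0$ are absolute, so the bound holds for every large $k$ simultaneously.
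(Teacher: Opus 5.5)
Your proposal is correct and takes the same approach as the paper, which deduces Corollary~\ref{cor:p190} in one line at the end of the proof of Theorem~\ref{thm:main} from $\varepsilon(k)\to 0$ and $k/\log k\to\infty$. Your Steps~1--3 spell out what the paper leaves implicit, namely the eventual bound $\frac{1}{e}-\varepsilon(k)\ge\frac{1}{2e}$ and the fact that a lower bound holding for all large $k$ gives a genuine limit.
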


In~\eqref{eq:main}, $k_{0}$ may be taken to be
$k_{0}=\max(k_{\mathrm{BHP}},10^{4})$, where $k_{\mathrm{BHP}}$ is the
threshold in the Baker--Harman--Pintz theorem
(Theorem~\ref{thm:bhp}). The $o(1)$ is explicit; see
Section~\ref{subsec:eps-rate}.

\section{Notation and external results used}
\label{sec:notation}

All logarithms are natural.

\subsection{Arithmetic progressions and colorings}
\label{subsec:ap}

A \emph{$k$-term arithmetic progression} ($k$-AP) in
$[N]:=\{1,2,\ldots,N\}$ is a sequence $(a,a+d,a+2d,\ldots,a+(k-1)d)$
with $a,d\in\mathbb{Z}$, $d\ge 1$, and $a+(k-1)d\le N$.

A coloring $\chi:[N]\to[r]$ is:
\begin{itemize}
\item \emph{monochromatic-$k$-AP-free} if no $k$-AP in $[N]$ has all
  $k$ terms of the same color;
\item \emph{rainbow-$k$-AP-free} if no $k$-AP has all $k$ terms of
  distinct colors.
\end{itemize}

\noindent\textbf{van der Waerden number} $W(r,k)$ is the least $N$ such
that every $r$-coloring of $[N]$ has a monochromatic $k$-AP. It exists
by van der Waerden's theorem (1927).

\noindent\textbf{Anti-van-der-Waerden number} $\aw([n],k)$ is the
least number of colors $t$ such that every \emph{surjective}
$t$-coloring (using exactly $t$ distinct colors) of $[n]$ contains a
rainbow $k$-AP.

\noindent\textbf{Canonical Ramsey function}
\[
H(k):=\min\bigl\{N\ge 1:\text{every finite coloring of }[N]
\text{ has either a mono or a rainbow }k\text{-AP}\bigr\}.
\]

\subsection{External theorems used}
\label{subsec:ext}

The standard anti-van-der-Waerden number is known to satisfy
$\aw([n],k)\ge k$ for all $n\ge k$ (the inequality is part of the
anti-Ramsey framework developed by Jungi\'c, Licht, Mahdian,
Ne\v{s}et\v{r}il and Radoi\v{c}i\'c~\cite{JLMR} and systematically
studied in~\cite{BEHH2016}); it is a direct pigeonhole consequence
of the definitions, since any surjective $t$-coloring with $t<k$
colors cannot place $k$ distinct colors on any $k$-term AP. We do
not need this full surjective form in what follows. The proof of
Theorem~\ref{thm:main} uses only the weaker reduction in the
following corollary, whose one-line argument we record for
completeness.

\begin{corollary}[Pigeonhole reduction $H(k)\ge W(k-1,k)$]
\label{cor:jlmr-to-W}
For every $k\ge 2$, $H(k)\ge W(k-1,k)$.
\end{corollary}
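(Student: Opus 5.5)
We argue directly from the definitions, using the witness coloring supplied by the definition of $W(k-1,k)$. Set $N := W(k-1,k)-1$. If $N=0$ there is nothing to prove, since $H(k)\ge 1$ by definition. Otherwise, by the minimality in the definition of $W(k-1,k)$, there is a coloring $\chi:[N]\to[k-1]$ with no monochromatic $k$-AP.

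The key step is the pigeonhole observation that $\chi$ also has no rainbow $k$-AP. Any $k$-AP $(a,a+d,\ldots,a+(k-1)d)$ consists of $k$ distinct integers, since $d\ge 1$. A rainbow $k$-AP would therefore need $k$ distinct colors on these terms. But $\chi$ takes at most $k-1$ values, so some two terms share a color, and no $k$-AP in $[N]$ is rainbow.

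It follows that $\chi$ is a finite coloring of $[N]$ with neither a monochromatic nor a rainbow $k$-AP. Hence $N$ does not lie in the set whose minimum defines $H(k)$. This set is upward closed: a coloring of $[M]$ with $M\ge N$ restricts to a coloring of $[N]$, and any $k$-AP in $[N]$ is also one in $[M]$. So every $M\le N$ is excluded as well, and therefore $H(k)\ge N+1=W(k-1,k)$.

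The case $k=2$ needs a separate check. Here $W(1,2)=2$, since a single color on $[2]$ gives a monochromatic $2$-AP. The bound $H(2)\ge 2$ holds because on $[1]$ no $2$-AP exists at all.

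There is no real obstacle in this argument. The only points to handle with care are the monotonicity of the property in $N$ and the degenerate small cases, both treated above.
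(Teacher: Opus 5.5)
Your proof is correct and is the same argument as the paper's: take a $(k-1)$-coloring of $[W(k-1,k)-1]$ with no monochromatic $k$-AP, and note by pigeonhole that it cannot contain a rainbow $k$-AP. Your upward-closure step makes rigorous the passage from ``$N$ is not admissible'' to $H(k)>N$, which the paper leaves implicit; the separate $k=2$ check is harmless but already covered by the general argument.
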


\begin{proof}
Let $N=W(k-1,k)-1$. There exists a $(k-1)$-coloring
$\chi:[N]\to[k-1]$ with no monochromatic $k$-AP. Since $\chi$ uses
only $k-1$ colors, no $k$-AP has $k$ distinct colors, so $\chi$ is
also rainbow-$k$-AP-free. Thus $\chi$ has neither mono nor rainbow
$k$-APs, giving $H(k)>N$ and hence $H(k)\ge W(k-1,k)$.
\end{proof}

This reduction is folklore and is used e.g.\ in~\cite[\S2]{BEHH2016};
it does not require the surjective form $\aw([n],k)\ge k$.

\begin{lemma}[Symmetric Lov\'asz Local Lemma, Erd\H{o}s--Lov\'asz 1975
\cite{ErdosLovasz1975}]
\label{lem:LLL}
Let $A_{1},\ldots,A_{n}$ be events in a finite probability space with
$\Pr[A_{i}]\le p$ for every $i$. Assume that each $A_{i}$ is mutually
independent of all but at most $d$ of the other events. If
\begin{equation}
e\,p\,(d+1)\;\le\;1 \qquad(e=\exp(1)),
\label{eq:lll-cond}
\end{equation}
then $\Pr\!\bigl[\bigcap_{i=1}^{n}\overline{A_{i}}\bigr]>0$.
\end{lemma}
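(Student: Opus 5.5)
We would prove the symmetric Local Lemma directly by the standard conditional-probability induction of Erd\H{o}s--Lov\'asz, without passing through the asymmetric version. The case $d=0$ is immediate and we dispose of it first. Then every $A_i$ is mutually independent of all the others. Also \eqref{eq:lll-cond} gives $p\le 1/e<1$. Hence $\Pr[\bigcap\overline{A_i}]=\prod_i(1-\Pr[A_i])\ge(1-p)^n>0$. For $d\ge 1$, the core claim to establish is:
\[
\Pr\Bigl[A_i \,\Bigm|\, \textstyle\bigcap_{j\in S}\overline{A_j}\Bigr]\;\le\;\frac{1}{d+1}
\qquad\text{for every } S\subseteq[n]\setminus\{i\} \text{ with } \Pr\bigl[\textstyle\bigcap_{j\in S}\overline{A_j}\bigr]>0 .
\]
We would prove this by induction on $|S|$. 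The base case $S=\emptyset$ is $\Pr[A_i]\le p\le 1/(e(d+1))\le 1/(d+1)$.

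For the inductive step, we split $S=S_1\sqcup S_2$. Here $S_1$ is the set of indices in $S$ on which $A_i$ may depend (so $|S_1|\le d$), and $S_2=S\setminus S_1$, with respect to which $A_i$ is mutually independent. If $S_1=\emptyset$, mutual independence gives the bound $p$ at once. Otherwise we write
\[
\Pr\Bigl[A_i\Bigm|\textstyle\bigcap_{S}\overline{A_j}\Bigr]
=\frac{\Pr\bigl[A_i\cap\bigcap_{S_1}\overline{A_j}\bigm|\bigcap_{S_2}\overline{A_j}\bigr]}{\Pr\bigl[\bigcap_{S_1}\overline{A_j}\bigm|\bigcap_{S_2}\overline{A_j}\bigr]}.
\]
The numerator is bounded by $\Pr[A_i\mid\bigcap_{S_2}\overline{A_j}]=\Pr[A_i]\le p$, using mutual independence. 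For the denominator, list $S_1=\{j_1,\dots,j_m\}$ and expand by the chain rule into a product of the factors $1-\Pr[A_{j_t}\mid \bigcap_{S_2}\overline{A_j}\cap\overline{A_{j_1}}\cap\dots\cap\overline{A_{j_{t-1}}}]$. Each conditioning set has size less than $|S|$ and omits $j_t$, so the inductive hypothesis bounds each factor below by $1-\frac1{d+1}$. The denominator is therefore at least $(1-\frac1{d+1})^{d}\ge 1/e$. Combining, the ratio is at most $ep\le 1/(d+1)$ by \eqref{eq:lll-cond}.

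Finally we apply the claim via the chain rule to get
\[
\Pr\Bigl[\textstyle\bigcap_{i=1}^n\overline{A_i}\Bigr]=\prod_{i=1}^n\Bigl(1-\Pr\bigl[A_i\bigm|\textstyle\bigcap_{j<i}\overline{A_j}\bigr]\Bigr)\ge\Bigl(1-\tfrac1{d+1}\Bigr)^n>0.
\]
The main point requiring care is bookkeeping rather than estimation. The induction must be run simultaneously over all $i$ and all $S$ of a given size. We must also verify at each step that the conditioning events have positive probability, so that the conditional probabilities are defined. This follows inside the same induction, since each chain-rule factor is at least $d/(d+1)>0$. The only analytic input is the elementary inequality $(1-\frac1{d+1})^d\ge e^{-1}$, which follows from $\log(1+1/d)\le 1/d$.
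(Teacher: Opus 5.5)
Your proof is correct and is essentially the paper's argument. Both handle $d=0$ separately, prove $\Pr[A_i\mid\bigcap_{j\in S}\overline{A_j}]\le 1/(d+1)$ by induction on $|S|$ via the split $S=S_1\sqcup S_2$ (numerator at most $p$, denominator at least $(d/(d+1))^d\ge 1/e$), and conclude with the chain rule. Where you differ, you are more careful than the paper: you check that the conditioning events have positive probability, and you take $S_1$ to be $S$ intersected with the fixed exceptional set of at most $d$ indices, which is what actually gives mutual independence of $A_i$ from $\{A_j : j\in S_2\}$.
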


\begin{proof}[Proof of Lemma~\ref{lem:LLL}]
\emph{Edge case $d=0$.} If $d=0$, each $A_{i}$ is mutually independent
of all other events $\{A_{j}:j\ne i\}$ collectively, so the $A_{i}$'s
are mutually independent. Hypothesis~\eqref{eq:lll-cond} reduces to
$ep\le 1$, i.e., $p\le 1/e<1$. Hence
\[
\Pr\!\left[\bigcap_{i=1}^{n}\overline{A_{i}}\right]
=\prod_{i=1}^{n}(1-\Pr[A_{i}])
\ge(1-p)^{n}\ge(1-1/e)^{n}>0.
\]
We henceforth assume $d\ge 1$.

\medskip
\emph{Main case $d\ge 1$.} We prove by induction on $|S|$ the sharper
claim: for every $i\in\{1,\ldots,n\}$ and every
$S\subseteq\{1,\ldots,n\}\setminus\{i\}$,
\begin{equation*}
\Pr\!\left[A_{i}\,\Big|\,\bigcap_{j\in S}\overline{A_{j}}\right]
\;\le\;\frac{1}{d+1}.
\tag{$*$}\label{eq:lll-star}
\end{equation*}

\emph{Granting~\eqref{eq:lll-star}}, the chain rule gives
\[
\Pr\!\left[\bigcap_{i=1}^{n}\overline{A_{i}}\right]
=\prod_{i=1}^{n}\Pr\!\left[\overline{A_{i}}\,\Big|\,\bigcap_{j<i}\overline{A_{j}}\right]
\ge\Bigl(1-\tfrac{1}{d+1}\Bigr)^{n}
=\Bigl(\tfrac{d}{d+1}\Bigr)^{n}>0,
\]
where the last inequality uses $d\ge 1$ so that $d/(d+1)\ge 1/2>0$.
This is the conclusion.

\medskip
\emph{Proof of~\eqref{eq:lll-star} by induction on $|S|$.}

\emph{Base case $|S|=0$.} We need
$\Pr[A_{i}]\le 1/(d+1)$. By hypothesis $ep(d+1)\le 1$, i.e.,
$p\le 1/(e(d+1))<1/(d+1)$ (since $e>1$).

\emph{Inductive step $|S|\ge 1$.} Assume~\eqref{eq:lll-star} holds
for every pair $(i',S')$ with $i'\in\{1,\ldots,n\}$,
$S'\subseteq\{1,\ldots,n\}\setminus\{i'\}$, and $|S'|<|S|$
(the claim $(*)$ is quantified jointly over all such pairs, so the
induction is on the cardinality of the conditioning set). Split $S$
as $S=S_{1}\sqcup S_{2}$, where
\[
S_{1}:=\{j\in S:A_{j}\text{ is not mutually independent of }A_{i}\},
\qquad S_{2}:=S\setminus S_{1}.
\]
By hypothesis $|S_{1}|\le d$. Write $F:=\bigcap_{k\in S_{2}}\overline{A_{k}}$.

\emph{Case I: $S_{1}=\emptyset$.} Then $A_{i}$ is mutually independent
of every event in $F$, so $\Pr[A_{i}\mid F]=\Pr[A_{i}]\le p<1/(d+1)$.

\emph{Case II: $S_{1}\ne\emptyset$.} By definition of conditional
probability,
\begin{align}
\Pr\!\left[A_{i}\,\Big|\,\bigcap_{j\in S}\overline{A_{j}}\right]
&=\Pr\!\left[A_{i}\,\Big|\,\Bigl(\bigcap_{j\in S_{1}}\overline{A_{j}}\Bigr)\cap F\right] \notag \\
&=\frac{\Pr\!\left[A_{i}\cap\bigcap_{j\in S_{1}}\overline{A_{j}}\,\Big|\,F\right]}
{\Pr\!\left[\bigcap_{j\in S_{1}}\overline{A_{j}}\,\Big|\,F\right]}.
\label{eq:lll-frac}
\end{align}

\emph{Numerator bound.} Since $A_{i}$ is mutually independent of
$\{A_{k}:k\in S_{2}\}$ (by definition of $S_{2}$), the event $A_{i}$
is independent of $F$. Hence
\[
\Pr\!\left[A_{i}\cap\bigcap_{j\in S_{1}}\overline{A_{j}}\,\Big|\,F\right]
\le\Pr[A_{i}\mid F]=\Pr[A_{i}]\le p.
\]

\emph{Denominator bound.} Enumerate $S_{1}=\{j_{1},\ldots,j_{m}\}$ with
$m:=|S_{1}|\le d$ (any fixed ordering). By the chain rule,
\[
\Pr\!\left[\bigcap_{j\in S_{1}}\overline{A_{j}}\,\Big|\,F\right]
=\prod_{t=1}^{m}\Pr\!\left[\overline{A_{j_{t}}}\,\Big|\,F\cap\bigcap_{s<t}\overline{A_{j_{s}}}\right].
\]
For each $t$, the conditioning set has size
$|S_{2}|+(t-1)\le|S_{2}|+m-1<|S|$, so the inductive hypothesis applies
to each factor:
$\Pr[A_{j_{t}}\mid F\cap\bigcap_{s<t}\overline{A_{j_{s}}}]\le 1/(d+1)$,
hence
$\Pr[\overline{A_{j_{t}}}\mid F\cap\bigcap_{s<t}\overline{A_{j_{s}}}]\ge d/(d+1)$.
Thus
\[
\Pr\!\left[\bigcap_{j\in S_{1}}\overline{A_{j}}\,\Big|\,F\right]
\;\ge\;\Bigl(\tfrac{d}{d+1}\Bigr)^{m}\;\ge\;\Bigl(\tfrac{d}{d+1}\Bigr)^{d}
\;\ge\;\tfrac{1}{e},
\]
where the last step uses $(d/(d+1))^{d}=1/(1+1/d)^{d}\ge 1/e$ by the
standard inequality $(1+1/d)^{d}\le e$.

\emph{Combining.} Plugging into~\eqref{eq:lll-frac}:
\[
\Pr\!\left[A_{i}\,\Big|\,\bigcap_{j\in S}\overline{A_{j}}\right]
\;\le\;\frac{p}{1/e}\;=\;ep\;\le\;\frac{1}{d+1},
\]
by hypothesis~\eqref{eq:lll-cond}. This completes the inductive step.
\end{proof}

\begin{remark}
\label{rem:LLL-ref}
Lemma~\ref{lem:LLL} is the symmetric form of the Erd\H{o}s--Lov\'asz
Local Lemma, as given, e.g., in Alon--Spencer
\cite[Chapter~5, Lemma~5.1.1 and Corollary~5.1.2]{AlonSpencer2016}.
The general (asymmetric) form is not needed in this paper.
\end{remark}

\begin{theorem}[Erd\H{o}s--Lov\'asz lower bound on $W(r,k)$]
\label{thm:EL}
There exists an absolute constant $k_{1}\ge 2$ such that for every
$r\ge 2$ and every $k\ge k_{1}$,
\begin{equation}
W(r,k)-1\;\ge\;\frac{r^{k-1}}{16\,k}.
\label{eq:EL}
\end{equation}
One may take $k_{1}=10$.
\end{theorem}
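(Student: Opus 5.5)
We apply Lemma~\ref{lem:LLL} to a uniformly random coloring $\chi:[N]\to[r]$, with $N:=\lfloor r^{k-1}/(16k)\rfloor$. If $N<k$ then $[N]$ contains no $k$-AP, and $W(r,k)-1\ge N$ holds trivially because $W(r,k)\ge k$ (color $[k-1]$ with one color). So assume $N\ge k$. For each $k$-AP $P$ in $[N]$, let $A_P$ be the event that $P$ is monochromatic. Each color is chosen independently and uniformly, so $\Pr[A_P]=r\cdot r^{-k}=r^{1-k}=:p$. If the lemma applies, some coloring of $[N]$ has no monochromatic $k$-AP, and hence $W(r,k)>N$.

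\textbf{Dependency degree.} The event $A_P$ is a function of the colors on $P$. It is therefore mutually independent of the family of all $A_Q$ with $Q\cap P=\emptyset$. Hence we may take $d$ to be an upper bound on the number of $k$-APs $Q\ne P$ that meet $P$. Fix a point $x\in[N]$. A $k$-AP through $x$ is determined by its common difference $\delta\ge1$ and by the position $j\in\{0,\dots,k-1\}$ that $x$ occupies. The constraint $(k-1)\delta\le N-1$ gives at most $N/(k-1)$ choices of $\delta$. So at most $kN/(k-1)$ $k$-APs contain $x$. Summing over the $k$ points of $P$ gives
\[
d+1\le \frac{k^{2}N}{k-1}\le 2kN \qquad(k\ge 2).
\]
I will use the sharper bound $k^2N/(k-1)$ when checking constants.

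\textbf{Verifying the LLL condition.} We need $e\,p\,(d+1)\le1$. It suffices to check
\[
e\,r^{1-k}\cdot\frac{k^{2}}{k-1}\cdot\frac{r^{k-1}}{16k}
=\frac{e}{16}\cdot\frac{k}{k-1}\le1 .
\]
This holds for every $k\ge2$, since $k/(k-1)\le 2$ and $2e/16<1$. So the constant $16$ is comfortable, and the claimed threshold $k_1=10$ (indeed any $k_1\ge2$) is more than enough. Once the counting is fixed the conclusion is immediate.

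The only step needing care is the count of $k$-APs through a point. The pair $(\delta,j)$ determines the progression, and one must check that the bound $\delta\le (N-1)/(k-1)$ holds uniformly in $j$. This bound is what makes $d$ linear in $N$ with constant about $k$. Everything else is routine: the independence structure comes from disjointness of supports, and the trivial case $N<k$ is handled as above.
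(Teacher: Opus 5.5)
Your setup matches the paper's: a uniform random $r$-coloring of $[N]$ and the symmetric Local Lemma with dependency degree of order $kN$. Your count $d+1\le k^{2}N/(k-1)$ and the check $\frac{e}{16}\cdot\frac{k}{k-1}<1$ are correct. The gap is in the last step. With $N=\lfloor r^{k-1}/(16k)\rfloor$, the conclusion $W(r,k)>N$ gives only $W(r,k)-1\ge\lfloor r^{k-1}/(16k)\rfloor$. This is strictly weaker than the stated bound whenever $r^{k-1}/(16k)$ is not an integer. For example, at $r=2$, $k=10$ the target is $3.2$, but you obtain only $W(2,10)-1\ge 3$. Since $W(r,k)-1$ is an integer, you need the Local Lemma to succeed at some $N\ge\lceil r^{k-1}/(16k)\rceil$.

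The paper handles exactly this by running the Local Lemma at $N=\lfloor r^{k-1}/(8k)\rfloor$ and spending the factor $2$ on the floor: $N\ge r^{k-1}/(8k)-1\ge r^{k-1}/(16k)$. This is valid once $r^{k-1}\ge 16k$, which holds for $r\ge 2$, $k\ge 10$. Your slack makes the same repair immediate:
\begin{itemize}
\item at $N=\lfloor r^{k-1}/(8k)\rfloor$ your condition reads $\frac{e}{8}\cdot\frac{k}{k-1}\le\frac{e}{4}<1$;
\item alternatively, take $N=\lceil r^{k-1}/(16k)\rceil\le r^{k-1}/(16k)+1$ and absorb the extra term $ek^{2}/((k-1)r^{k-1})\le ek^{2}/((k-1)2^{k-1})$, which is small for $k\ge 10$.
\end{itemize}
Either repair uses that $r^{k-1}$ is large compared with $k$. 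So your parenthetical claim that any $k_{1}\ge 2$ works is not justified by this argument: at $r=k=2$ your $N$ is $0$ and you get only $W(2,2)-1\ge 0$. The stated $k_{1}=10$ is fine.
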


\begin{proof}
Fix $r\ge 2$ and $k\ge k_{1}$. Set $N:=\lfloor r^{k-1}/(8k)\rfloor$.
This is $\ge 1$ for $k\ge k_{1}=10$: indeed $r\ge 2$ gives
$r^{k-1}\ge 2^{9}=512>80=8\cdot k_{1}$.

Let $\chi:[N]\to[r]$ be uniformly random. For each $k$-AP
$P\subseteq[N]$, let $B_{P}$ be the event ``$P$ is monochromatic under
$\chi$''. Then $\Pr[B_{P}]=r\cdot r^{-k}=r^{1-k}$.

Each point of $[N]$ is contained in at most
$\Delta_{1}:=k(N-1)/(k-1)\le 2N$ $k$-APs (the inequality is
equivalent to $-k\le N(k-2)$, which holds for all $k\ge 2,N\ge 1$).
Each $B_{P}$ is determined by the coloring of the $k$ points of $P$,
each of which is in at most $\Delta_{1}$ $k$-APs. Hence each $B_{P}$
is mutually independent of all but at most $k\Delta_{1}\le 2kN$
other events $B_{P'}$.

Apply Lemma~\ref{lem:LLL} with $p=r^{1-k}$ and $d=2kN$: the lemma
concludes $\Pr[\bigcap_{P}\overline{B_{P}}]>0$ provided
\begin{equation}
e\cdot r^{1-k}\cdot(2kN+1)\;\le\;1.
\label{eq:EL-verify}
\end{equation}
We verify~\eqref{eq:EL-verify}. Since $N\le r^{k-1}/(8k)$, we have
$2kN\le r^{k-1}/4$; and for $r\ge 2$, $k\ge 10$, $r^{k-1}\ge 2^{9}=512$, so
$1\le r^{k-1}/512$. Therefore
\[
e\cdot r^{1-k}\cdot(2kN+1)
\;\le\;e\cdot r^{1-k}\cdot\bigl(r^{k-1}/4+1\bigr)
\;=\;\tfrac{e}{4}+e\cdot r^{1-k}
\;\le\;\tfrac{e}{4}+e\cdot 2^{-9}
\;\le\;\tfrac{e}{4}\bigl(1+4/512\bigr)
\;<\;1,
\]
using $e/4\approx 0.6796$ and $4/512=1/128$.

By Lemma~\ref{lem:LLL}, there exists a realization of $\chi$ avoiding
every $B_{P}$ --- an $r$-coloring of $[N]$ with no monochromatic
$k$-AP --- so $W(r,k)>N$. Hence
\[
W(r,k)-1\;\ge\;N\;\ge\;\frac{r^{k-1}}{8k}-1\;\ge\;\frac{r^{k-1}}{16k},
\]
where the last step uses $r^{k-1}/(8k)\ge 2$ (equivalent to
$r^{k-1}\ge 16k$, which holds for $k\ge 10,r\ge 2$ since
$2^{k-1}\ge 2^{9}=512\ge 16\cdot 10=160$).
\end{proof}

\begin{remark}
\label{rem:EL-constant}
The constant $16$ is not optimal; \cite[Ch.~5]{AlonSpencer2016}
gives $r^{k-1}/(4k)$ via a sharper hypergraph-chromatic-number
formulation. Any fixed constant suffices for
Theorem~\ref{thm:main}. The threshold $k_{1}=10$ is also not optimal;
a direct check shows that any $k_{1}\ge 7$ works with the same
denominator~$8k$ (the condition $r^{k-1}\ge 2e\cdot k$ needed for
$N\ge 1$ and the LLL inequality is satisfied at $r=2,k=7$ since
$2^{6}=64\ge 14e\approx 38.1$).
\end{remark}

\begin{remark}[Uniformity in $r$]
\label{rem:EL-uniform}
The constant in~\eqref{eq:EL} does \textbf{not} depend on $r$: the
same $k_{1}=10$ suffices for every $r\ge 2$, including
$r=r(k)$ that grows with $k$. This uniformity is essential for applying
\eqref{eq:EL} at $r_{0}=\lfloor k/\log k\rfloor$ in
Section~\ref{sec:proof}.
\end{remark}

\begin{theorem}[BCT 2018~\cite{BCT}, Theorem 2.1 restricted form]
\label{thm:BCT}
For every prime $p$, every $r\ge 2$ with $r\le p$, and every $k\ge 2$
with $p\le k$,
\begin{equation}
W(r,k)\;>\;p\bigl(W(r-1,k)-1\bigr).
\label{eq:BCT}
\end{equation}
\end{theorem}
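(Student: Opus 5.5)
Since Theorem~\ref{thm:BCT} is quoted from~\cite{BCT}, the first step is bookkeeping. I would check that the hypotheses ``$p$ prime, $2\le r\le p\le k$'' are a special case of the hypotheses of~\cite[Theorem~2.1]{BCT}, and that its conclusion, written for colorings of $[N]$ with the convention $W(r,k)-1=\max\{N:\ \exists$ a mono-$k$-AP-free $r$-coloring of $[N]\}$, is exactly~\eqref{eq:BCT}. For completeness I would then sketch a self-contained product construction. I have not fully checked this construction, so the citation remains the actual justification.

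Set $M:=W(r-1,k)-1$ and fix a mono-$k$-AP-free coloring $c:[M]\to[r-1]$. On $[pM]$ write each $n$ uniquely as $n=p(m-1)+s$ with $m\in[M]$ and $s\in\{1,\dots,p\}$. Define $\chi(n):=\sigma_s\bigl(c(m)\bigr)$, where $\sigma_1,\dots,\sigma_p$ are maps $[r-1]\to[r]$ depending only on the residue class. The condition $r\le p$ is what makes room for one extra color, $r$, to be placed on residue classes in a balanced way. Let $P=(n_0,n_0+d,\dots,n_0+(k-1)d)$ be a $k$-AP in $[pM]$; I split into two cases according to whether $p$ divides $d$.

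\emph{Case $p\mid d$.} All terms of $P$ share the residue $s$, and the block indices $m_i$ form a $k$-AP in $[M]$ with difference $d/p$. If $\sigma_s$ is injective, a monochromatic $P$ would give a monochromatic $k$-AP for $c$, which is impossible.

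\emph{Case $p\nmid d$.} Because $p$ is prime and $k\ge p$, the first $p$ terms of $P$ run through every residue class mod $p$. The goal is to choose the family $(\sigma_s)$ so that no single color $j$ can satisfy $\sigma_{s}(c(m_s))=j$ simultaneously for all residues $s$. The simplest version is to have some residue class $s_0$ whose image $\sigma_{s_0}([r-1])$ misses $j$, for every color $j$; this is feasible precisely because $r\le p$ leaves enough residue classes to cover each of the $r$ colors by a missing slot, for example via cyclic shifts $\sigma_s(x)=x+s \bmod r$ with $s$ running over $r\le p$ classes. With such a choice, the $p$ distinct residues hit by $P$ include one whose palette omits the common color, so $P$ is not monochromatic.

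This gives an $r$-coloring of $[pM]$ with no monochromatic $k$-AP, so $W(r,k)>pM=p\bigl(W(r-1,k)-1\bigr)$. The main obstacle is the compatibility between the two cases. Injectivity of each $\sigma_s$ (needed when $p\mid d$) has to coexist with the covering property (needed when $p\nmid d$), and the block decomposition must not introduce boundary effects for APs that cross block boundaries. Whether cyclic shifts by themselves achieve this is exactly the point I would verify first. If they do not, I would fall back on the citation~\cite[Theorem~2.1]{BCT}.
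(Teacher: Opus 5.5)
Your construction is correct and is essentially the paper's. The paper's coloring is also of the form $\chi(i)=\sigma_s(\chi_0(j))$, with $\sigma_s$ sending color $s+1$ to the new color $r$ at positions $s\le r-2$ and equal to the identity at positions $s\ge r-1$, so the missed colors again cover $[r]$. It runs through exactly your two cases: $p\mid d$ via injectivity, and $p\nmid d$ via the first $p$ terms meeting every residue class.

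The check you deferred is immediate. Each cyclic shift is injective on $[r-1]$ and misses exactly the color $\equiv s\pmod r$, so $p\ge r$ consecutive values of $s$ cover all $r$ colors. No boundary effects can occur, since only $n \bmod p$ and the linear behaviour of the block index are used. So you should not retreat to the citation: as the paper points out, BCT's Theorem~2.1 is stated only for $p$ the largest prime $\le k$, and it does not by itself cover an arbitrary prime $p$ with $r\le p\le k$.
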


\noindent Since both sides of~\eqref{eq:BCT} are integers, one has
$W(r,k)\ge p(W(r-1,k)-1)+1$, equivalently
$W(r,k)-1\ge p(W(r-1,k)-1)$.

As stated in \cite[Theorem~2.1]{BCT}, the BCT recurrence is
$W(r,k)>p(W(r-\lceil r/p\rceil,k)-1)$ with $p$ the largest prime
$\le k$; the same blow-up argument is in fact valid for every prime
$p\le k$ with $r\le p$, in which case $\lceil r/p\rceil=1$ and the
recurrence reduces to the restricted form~\eqref{eq:BCT} used in this
paper. For completeness, and because in our application $p=p^{*}$ is
the specific prime supplied by the Baker--Harman--Pintz theorem rather
than the largest prime below $k$, we give a short self-contained proof
of the restricted form $r\le p$ below.

\begin{proof}[Proof of Theorem~\ref{thm:BCT}]
Set $M:=W(r-1,k)-1\ge 1$ and let
$\chi_{0}:[M]\to\{1,\ldots,r-1\}$ be an $(r-1)$-coloring of $[M]$
with no monochromatic $k$-AP (such a coloring exists by the definition
of $W(r-1,k)$). It suffices to construct an $r$-coloring
$\chi:[pM]\to\{1,\ldots,r\}$ of $[pM]$ with no monochromatic $k$-AP;
then $W(r,k)>pM=p(W(r-1,k)-1)$.

\medskip
\emph{Blow-up construction.} For each $i\in\{1,2,\ldots,pM\}$, write
uniquely
\begin{equation}
i\;=\;(j-1)\,p+s+1,\qquad j\in\{1,\ldots,M\},\ \ s\in\{0,1,\ldots,p-1\}.
\label{eq:decomp}
\end{equation}
Define the shift $\tau(j):=\chi_{0}(j)-1\in\{0,1,\ldots,r-2\}$ for
each $j\in[M]$, and set
\begin{equation}
\chi(i)\;:=\;
\begin{cases}
\chi_{0}(j) & \text{if }s\ne\tau(j),\\
r & \text{if }s=\tau(j).
\end{cases}
\label{eq:chi}
\end{equation}
In words: within block $j$, the $p-1$ positions
$s\in\{0,\ldots,p-1\}\setminus\{\tau(j)\}$ inherit the base color
$\chi_{0}(j)$, and the single position $s=\tau(j)$ is reserved for
the new color $r$. Since $r\le p$, we have
$\tau(j)\in\{0,\ldots,r-2\}\subseteq\{0,\ldots,p-1\}$, so the
construction is well-defined.

\medskip
\emph{No monochromatic $k$-AP.} Suppose for contradiction that $\chi$
has a monochromatic $k$-AP $i_{1}<i_{2}<\cdots<i_{k}$ with common
difference $d\ge 1$ and common color $c^{*}\in\{1,\ldots,r\}$.
Decompose each $i_{\ell}=(j_{\ell}-1)p+s_{\ell}+1$ as
in~\eqref{eq:decomp} and write $d=qp+t$ with $q\ge 0$ and
$t\in\{0,1,\ldots,p-1\}$.

\emph{Case A: $t=0$ (i.e., $p\mid d$).} Then $s_{\ell+1}=s_{\ell}$ and
$j_{\ell+1}=j_{\ell}+q$ for all $\ell$, so $s_{1}=\cdots=s_{k}$ (call
this common value $s$) and $(j_{\ell})_{\ell=1}^{k}$ is a genuine
$k$-AP in $[M]$ with common difference $q\ge 1$ (if $q=0$ then
$i_{\ell+1}=i_{\ell}$, contradicting $d\ge 1$).

\begin{itemize}
\item If $c^{*}=r$:~\eqref{eq:chi} forces
  $s=\tau(j_{\ell})=\chi_{0}(j_{\ell})-1$ for each $\ell$, so
  $\chi_{0}(j_{1})=\cdots=\chi_{0}(j_{k})=s+1$ --- a monochromatic
  $k$-AP in $\chi_{0}$, contradicting the choice of $\chi_{0}$.
\item If $c^{*}\in\{1,\ldots,r-1\}$:~\eqref{eq:chi} forces
  $\chi_{0}(j_{\ell})=c^{*}$ for each $\ell$ and
  $s\ne\tau(j_{\ell})=c^{*}-1$ (a single fixed value). Again
  $(j_{\ell})$ is a monochromatic $k$-AP in $\chi_{0}$, contradiction.
\end{itemize}

\emph{Case B: $t\in\{1,\ldots,p-1\}$ (i.e., $p\nmid d$).} Since $p$
is prime and $0<t<p$, $\gcd(t,p)=1$. The within-block positions
satisfy $s_{\ell+1}\equiv s_{\ell}+t\pmod{p}$, so the map
\[
\ell\;\mapsto\;s_{\ell}\bmod p\;=\;(s_{1}+(\ell-1)\,t)\bmod p
\]
is a bijection on any $p$ consecutive values of $\ell$. Since
$k\ge p$ (because $p\le k$ by hypothesis), the indices
$\ell=1,2,\ldots,p$ give $s_{1},s_{2},\ldots,s_{p}$ taking
\emph{each} value in $\{0,1,\ldots,p-1\}$ exactly once.

\begin{itemize}
\item If $c^{*}=r$:~\eqref{eq:chi} forces $s_{\ell}=\tau(j_{\ell})$
  for each $\ell$. Since $\tau(j_{\ell})\in\{0,1,\ldots,r-2\}$, this
  forces $s_{\ell}\in\{0,1,\ldots,r-2\}$ for each
  $\ell\in\{1,\ldots,p\}$. But the $s_{\ell}$'s (for
  $\ell=1,\ldots,p$) take every value in $\{0,\ldots,p-1\}$, and
  $p-1\ge r-1>r-2$, so some $s_{\ell}\in\{r-1,\ldots,p-1\}$ ---
  a contradiction.
\item If $c^{*}\in\{1,\ldots,r-1\}$:~\eqref{eq:chi} forces
  $\chi_{0}(j_{\ell})=c^{*}$ and
  $s_{\ell}\ne\tau(j_{\ell})=c^{*}-1$ for each
  $\ell\in\{1,\ldots,k\}$. So $s_{\ell}\ne c^{*}-1$ for all
  $\ell\in\{1,\ldots,p\}$. But the $s_{\ell}$'s take every value in
  $\{0,\ldots,p-1\}$, including
  $c^{*}-1\in\{0,\ldots,r-2\}\subseteq\{0,\ldots,p-1\}$, so some
  $s_{\ell}=c^{*}-1$ --- a contradiction.
\end{itemize}

Both cases yield contradictions, so $\chi$ has no monochromatic
$k$-AP, proving $W(r,k)>pM$ and hence~\eqref{eq:BCT}.
\end{proof}

\begin{remark}
\label{rem:BCT-hyp}
The hypothesis $p\le k$ is used only in Case B to guarantee $k\ge p$,
so that the first $p$ terms of the monochromatic $k$-AP exist. The
hypothesis $r\le p$ is used to ensure the shift
$\tau(j)\in\{0,\ldots,r-2\}$ lies in $\{0,\ldots,p-1\}$. Both are
satisfied in our application ($r=r_{0}+1,\ldots,p^{*}$, $p=p^{*}\le k-1\le k$,
$r\le p^{*}$; the latter follows from $r_{0}<p^{*}$ established in
Section~\ref{subsec:setup} under $k\ge k_{0}$).
\end{remark}

\begin{theorem}[Baker--Harman--Pintz 2001~\cite{BHP}, Theorem 1]
\label{thm:bhp}
There exists $x_{\mathrm{BHP}}$ such that for every
$x\ge x_{\mathrm{BHP}}$, the interval $[x-x^{0.525},x]$ contains a
prime.
\end{theorem}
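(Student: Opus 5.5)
The statement just above is Theorem~\ref{thm:bhp}, the Baker--Harman--Pintz prime-gap theorem. The paper treats it as an external black box, and I would not reprove it. A self-contained proof is a long analytic argument, so here is only the strategy I would follow, in the style of Harman's alternative sieve. Set $y:=x^{0.525}$, let $\mathcal{A}:=\mathbb{Z}\cap(x-y,x]$, and let $\mathcal{B}:=\mathbb{Z}\cap(x/2,x]$ serve as a comparison set. For a prime $p$, write $\mathcal{A}_{p}:=\{n\in\mathcal{A}:p\mid n\}$, and let $S(\mathcal{A},z)$ count the elements of $\mathcal{A}$ with no prime factor below $z$. The number of primes in $\mathcal{A}$ is $S(\mathcal{A},x^{1/2})$. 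The aim is
\[
S(\mathcal{A},x^{1/2})\;\ge\;(c-o(1))\,\tfrac{y}{x}\,S(\mathcal{B},x^{1/2})
\]
for some explicit $c>0$. Since $S(\mathcal{B},x^{1/2})\asymp x/\log x$, this gives $\gg y/\log x>0$ primes in $[x-x^{0.525},x]$ for large $x$.

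\emph{Step 1 (arithmetic information).} Using Perron's formula, I would reduce the asymptotic $|\mathcal{A}\cap\mathcal{C}|=\frac{y}{x}|\mathcal{B}\cap\mathcal{C}|(1+o(1))$, for sets $\mathcal{C}$ of products $mn$ with coefficients, to mean-value bounds for Dirichlet polynomials $M(s)N(s)$ on $\operatorname{Re}s=\tfrac12$, $|t|\le x^{1+\varepsilon}/y$. Two kinds of bound are needed:
\begin{itemize}
\item \emph{Type~I}, where one factor is smooth (a partial $\zeta$-sum). These follow from fourth-moment bounds and Watt's mean value theorem for $\zeta$ times a Dirichlet polynomial.
\item \emph{Type~II}, where $m$ lies in a prescribed range. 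These follow from the mean value theorem combined with Huxley's zero-density and large-values estimates and Heath-Brown's large-values refinement.
\end{itemize}
Together they yield the asymptotic for $S(\mathcal{A}_{p_{1}\cdots p_{j}},z)$ whenever the variables fall in explicitly computable regions of $(\log p_{i}/\log x)$-space.

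\emph{Step 2 (Buchstab decomposition).} I would apply Buchstab's identity repeatedly to $S(\mathcal{A},x^{1/2})$. This expands it into signed sums of $S(\mathcal{A}_{p_{1}\cdots p_{j}},p_{j})$, and the same decomposition is run in parallel for $\mathcal{B}$. Each sum whose variables admit a factorization into a Type~I or Type~II range is handled by Step~1. Role-reversal tricks (swapping which variable is sieved) and grouping variables to land inside a Type~II range would be used to enlarge the set of handled terms. Any term still unhandled that enters with a positive sign is simply discarded from the lower bound, since it is nonnegative.

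\emph{Step 3 (numerics), the main obstacle.} The discarded terms have $\mathcal{B}$-analogues equal to $\frac{x}{\log x}$ times integrals of Buchstab's function $\omega$ over the unhandled polytopes. One must show that their total is strictly less than $1$ at $\theta=0.525$. This is where the exponent $0.525$ is fixed, and it requires careful numerical integration over many-dimensional regions. I expect this, together with sharpening the Type~II ranges enough for the loss to fall below $1$, to be the genuinely hard part. For the purposes of this paper I would instead simply cite \cite[Theorem~1]{BHP}.
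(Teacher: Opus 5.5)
Your proposal matches the paper: the paper gives no proof of this statement and uses Theorem~1 of Baker--Harman--Pintz as its only analytic black box, which is where you also land. Your Harman-sieve outline (Perron reduction to Type~I/II mean values, Buchstab decomposition, discarding unhandled positive terms, and a numerical check that the loss is below $1$ at $\theta=0.525$) fairly describes how the result is proved; one small slip is that with $\mathcal{B}=\mathbb{Z}\cap(x/2,x]$ the comparison factor should be $|\mathcal{A}|/|\mathcal{B}|\approx 2y/x$ rather than $y/x$, which only changes the constant $c$.
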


\begin{corollary}
\label{cor:bhp}
For $k\ge k_{\mathrm{BHP}}:=\lceil x_{\mathrm{BHP}}\rceil+1$, there
exists a prime $p^{*}$ with
\[
k-1-(k-1)^{0.525}\;\le\;p^{*}\;\le\;k-1.
\]
\end{corollary}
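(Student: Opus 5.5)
The plan is to apply Theorem~\ref{thm:bhp} at $x:=k-1$. By the choice $k_{\mathrm{BHP}}=\lceil x_{\mathrm{BHP}}\rceil+1$, every $k\ge k_{\mathrm{BHP}}$ gives
\[
k-1\;\ge\;\lceil x_{\mathrm{BHP}}\rceil\;\ge\;x_{\mathrm{BHP}},
\]
so the hypothesis of Theorem~\ref{thm:bhp} holds at $x=k-1$.

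The theorem then supplies a prime $p^{*}$ in the closed interval $[x-x^{0.525},x]$. Substituting $x=k-1$, this interval is
\[
\bigl[\,k-1-(k-1)^{0.525},\;k-1\,\bigr],
\]
which is exactly the two-sided bound claimed. No rounding issue arises, since the theorem is stated for every real $x\ge x_{\mathrm{BHP}}$, and in particular for the integer $k-1$.

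The only step requiring any care is the threshold bookkeeping: checking that $k\ge\lceil x_{\mathrm{BHP}}\rceil+1$ indeed implies $k-1\ge x_{\mathrm{BHP}}$. This is immediate from $\lceil y\rceil\ge y$. Nothing else is needed; in particular, there is no need yet to verify that $p^{*}\ge 2$ or that $p^{*}>r_{0}$, since these belong to the later setup section.
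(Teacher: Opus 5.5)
Your proposal is correct and matches the paper's proof, which is simply to apply Theorem~\ref{thm:bhp} at $x=k-1$. Your explicit check that $k\ge\lceil x_{\mathrm{BHP}}\rceil+1$ gives $k-1\ge x_{\mathrm{BHP}}$ spells out the threshold step that the paper leaves implicit.
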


\begin{proof}
Apply Theorem~\ref{thm:bhp} with $x=k-1$.
\end{proof}

\noindent Equivalently, $p^{*}=k\cdot(1-O(k^{-0.475}))$ and
$p^{*}\le k-1$.

\begin{theorem}[Monotonicity of $W$ in $r$]
\label{thm:monotone}
For every $r'\ge r\ge 2$ and every $k\ge 2$, $W(r',k)\ge W(r,k)$.
\end{theorem}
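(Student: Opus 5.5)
The plan is to argue directly from the definition of $W$, with no recurrence needed. Monotonicity in $r$ comes down to one observation: an $r$-coloring is already a special case of an $r'$-coloring when $r\le r'$. Equivalently, we show that any $r$-coloring of $[W(r',k)-1]$ witnessing a lower bound for fewer colors can be compared with colorings using more colors. The cleanest route is through the contrapositive on sets.

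Set $N:=W(r,k)-1$. By the definition of $W(r,k)$ as a least integer, there is a coloring $\chi:[N]\to[r]$ with no monochromatic $k$-AP. (If $N=0$, the claim $W(r',k)\ge 1=W(r,k)$ holds trivially since $W(r',k)\ge 1$.) Because $r\le r'$, we have $[r]\subseteq[r']$, so we may view $\chi$ as a map $[N]\to[r']$. Whether a $k$-AP is monochromatic depends only on the values of $\chi$ on its terms, not on the codomain. Hence $\chi$ is an $r'$-coloring of $[N]$ with no monochromatic $k$-AP.

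It follows that $W(r',k)>N$, that is, $W(r',k)\ge N+1=W(r,k)$. This step is routine. The only care needed concerns the convention "$r'$-coloring." If the definition required surjectivity it would cause trouble, but the paper's $W(r,k)$ quantifies over all maps $[N]\to[r]$, so no surjectivity is needed and there is no genuine obstacle. Existence of the numbers $W(\cdot,k)$ is supplied by van der Waerden's theorem, as recalled in Section~\ref{subsec:ap}.
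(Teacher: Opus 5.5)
Your proof is correct and is essentially the paper's argument: take an $r$-coloring of $[W(r,k)-1]$ with no monochromatic $k$-AP, regard it as an $r'$-coloring via $[r]\subseteq[r']$, and conclude $W(r',k)>W(r,k)-1$. Your $N=0$ case never occurs, since $W(r,k)\ge k\ge 2$. The step from ``a good coloring of $[N]$ exists'' to $W(r',k)>N$ also needs that restricting a good coloring to a shorter interval keeps it good; the paper leaves this implicit exactly as you do.
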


\begin{proof}
Any $r$-coloring of $[N]$ is also an $r'$-coloring (using at most
$r\le r'$ of the available colors). If $W(r,k)>N$, there is an
$r$-coloring of $[N]$ with no mono $k$-AP; viewed as an
$r'$-coloring, it still has no mono $k$-AP; so $W(r',k)>N$.
\end{proof}

\section{Proof of the main theorem}
\label{sec:proof}

\begin{proof}[Proof of Theorem~\ref{thm:main} and Corollary~\ref{cor:p190}]
The argument occupies Sections~\ref{subsec:setup}--\ref{subsec:asymp}
below: Section~\ref{subsec:setup} defines the working threshold $k_0$
and the auxiliary prime $p^{*}$;
Section~\ref{subsec:chain} assembles the chain of inequalities leading
to~\eqref{eq:chain2}; and Section~\ref{subsec:asymp} turns this chain
into the asymptotic conclusion~\eqref{eq:main}.

\subsection{Setup}
\label{subsec:setup}

Define the working threshold
\[
k_{0}\;:=\;\max\bigl(k_{\mathrm{BHP}},\,k_{1},\,K_{\mathrm{sep}}\bigr),
\]
where $k_{\mathrm{BHP}}$ is from Corollary~\ref{cor:bhp}, $k_{1}=10$
is the threshold in Theorem~\ref{thm:EL}, and $K_{\mathrm{sep}}$ is
chosen to make the separation inequality $k/\log k<k-1-k^{0.525}$
hold for all $k\ge K_{\mathrm{sep}}$. We claim
$K_{\mathrm{sep}}=10^{4}$ suffices. Setting
\[
f(k)\;:=\;k-1-k^{0.525}-\frac{k}{\log k},
\]
the separation inequality is equivalent to $f(k)>0$. We show (i) a
positive value at one point and (ii) positive derivative for
$k\ge 10^{4}$, which together imply $f(k)>0$ for all $k\ge 10^{4}$.

\emph{(i) Base point.} At $k=10^{4}$: $(10^{4})^{0.525}=10^{2.1}\approx 125.9$
and $10^{4}/\log 10^{4}=10^{4}/(4\ln 10)\approx 1085.74$, so
$f(10^{4})\approx 10^{4}-1-125.9-1085.74\approx 8787>0$.

\emph{(ii) Monotonicity.} Differentiating,
\[
f'(k)\;=\;1-0.525\,k^{-0.475}-\frac{d}{dk}\!\left(\frac{k}{\log k}\right)
\;=\;1-0.525\,k^{-0.475}-\frac{\log k-1}{(\log k)^{2}}.
\]
The function $g(x):=(x-1)/x^{2}$ satisfies
$g'(x)=(2-x)/x^{3}<0$ for $x>2$; hence
$g(\log k)=(\log k-1)/(\log k)^{2}$ is decreasing in $k$ for
$\log k>2$, i.e., $k>e^{2}\approx 7.4$. Therefore, for all
$k\ge 10^{4}$ (so $\log k\ge 4\ln 10\approx 9.21$),
\[
\frac{\log k-1}{(\log k)^{2}}\;\le\;\frac{9.21-1}{9.21^{2}}\approx 0.097,
\qquad
0.525\,k^{-0.475}\;\le\;0.525\cdot 10^{-1.9}\approx 0.0066,
\]
giving $f'(k)\ge 1-0.0066-0.097\approx 0.90>0$ uniformly for
$k\ge 10^{4}$. Combining (i) and (ii), $f$ is increasing and positive
on $[10^{4},\infty)$, so $k/\log k<k-1-k^{0.525}$ for every
$k\ge 10^{4}$. This justifies $K_{\mathrm{sep}}=10^{4}$.

Since $K_{\mathrm{sep}}=10^{4}\ge k_{1}=10$, the max
$\max(k_{\mathrm{BHP}},k_{1},K_{\mathrm{sep}})$ collapses to
$\max(k_{\mathrm{BHP}},10^{4})$, and we take
$k_{0}=\max(k_{\mathrm{BHP}},10^{4})$ in the statement of
Theorem~\ref{thm:main}; the precise value of $k_{\mathrm{BHP}}$ is
the one from BHP 2001~\cite{BHP}.

\medskip
Fix $k\ge k_{0}$. Then $r_{0}:=\lfloor k/\log k\rfloor\ge 2$ (since
$k\ge k_{0}\ge K_{\mathrm{sep}}=10^{4}$ gives
$k/\log k\ge 10^{4}/\log 10^{4}\approx 1085\ge 2$), and
Corollary~\ref{cor:bhp} supplies a prime
$p^{*}\in[k-1-(k-1)^{0.525},k-1]$. We also have
\[
r_{0}\;=\;\lfloor k/\log k\rfloor\;\le\;\tfrac{k}{\log k}
\;<\;k-1-k^{0.525}\;\le\;p^{*}
\qquad(k\ge k_{0}),
\]
where the strict inequality $k/\log k<k-1-k^{0.525}$ holds by the
definition of $K_{\mathrm{sep}}\le k_{0}$. Hence $r_{0}\le p^{*}$,
and Theorem~\ref{thm:BCT} applies at each
$r\in\{r_{0}+1,\ldots,p^{*}\}$ (which satisfies $2\le r\le p^{*}$
and $p^{*}\le k$).

\subsection{The chain of inequalities}
\label{subsec:chain}

Let $a_{r}:=W(r,k)-1$.

\smallskip
\noindent\textbf{Step 1 (Pigeonhole reduction).}
$H(k)\ge W(k-1,k)=a_{k-1}+1$ by Corollary~\ref{cor:jlmr-to-W}.

\smallskip
\noindent\textbf{Step 2 (EL base).} For $k\ge k_{0}\ge k_{1}=10$ and
$r_{0}\ge 2$ (as set up in Section~\ref{subsec:setup}),
Theorem~\ref{thm:EL} gives
\[
a_{r_{0}}\;\ge\;\frac{r_{0}^{k-1}}{16\,k}.
\]

\smallskip
\noindent\textbf{Step 3 (BCT iteration).}
Fix the prime $p:=p^{*}$ supplied by Corollary~\ref{cor:bhp}. For each
integer $s$ with $r_{0}+1\le s\le p^{*}$, the hypotheses of
Theorem~\ref{thm:BCT} are satisfied at the same prime
$p=p^{*}$: indeed $s\ge 2$ and $s\le p^{*}=p$ hold by the range of
$s$, and $p=p^{*}\le k-1\le k$ holds by Corollary~\ref{cor:bhp}.
Applying Theorem~\ref{thm:BCT} with color parameter $r=s$ and prime
$p=p^{*}$ therefore gives $W(s,k)-1\ge p^{*}\bigl(W(s-1,k)-1\bigr)$,
i.e., $a_{s}\ge p^{*}\,a_{s-1}$. Iterating this inequality
successively for $s=r_{0}+1,r_{0}+2,\ldots,p^{*}$ (a telescoping
product of $p^{*}-r_{0}$ factors, each equal to $p^{*}$) yields
\begin{equation}
a_{p^{*}}\;\ge\;(p^{*})^{\,p^{*}-r_{0}}\,a_{r_{0}}.
\label{eq:chain1}
\end{equation}
Remark~\ref{rem:BCT-hyp} records that the two hypotheses
($s\le p^{*}$ and $p^{*}\le k$) are exactly what
Theorem~\ref{thm:BCT} requires at each step.

\smallskip
\noindent\textbf{Step 4 (Monotonicity).}
Since $p^{*}\le k-1$, Theorem~\ref{thm:monotone} applied with
$(r,r')=(p^{*},k-1)$ gives $W(k-1,k)\ge W(p^{*},k)$, i.e.,
\[
a_{k-1}\;=\;W(k-1,k)-1\;\ge\;W(p^{*},k)-1\;=\;a_{p^{*}}.
\]

\smallskip
\noindent Combining Steps 1--4, the monotonicity step, the
iteration~\eqref{eq:chain1}, and the EL base gives
\begin{equation}
H(k)\;\ge\;a_{k-1}+1\;\ge\;a_{p^{*}}+1\;\ge\;(p^{*})^{p^{*}-r_{0}}\cdot
\frac{r_{0}^{k-1}}{16k}.
\label{eq:chain2}
\end{equation}

\subsection{Asymptotic rate}
\label{subsec:asymp}

Take the $k$-th root of~\eqref{eq:chain2} and divide by $k$:
\begin{equation}
\frac{H(k)^{1/k}}{k}\;\ge\;\frac{(p^{*})^{(p^{*}-r_{0})/k}\cdot r_{0}^{(k-1)/k}
\cdot(16k)^{-1/k}}{k}.
\label{eq:chain3}
\end{equation}
We evaluate each factor.

\smallskip
\noindent\textbf{Factor 1.}
$(16k)^{-1/k}=e^{-\log(16k)/k}=1-O(\log k/k)=1-o(1)$.

\smallskip
\noindent\textbf{Factor 2.} $r_{0}^{(k-1)/k}$. The floor satisfies
$r_{0}\in[k/\log k-1,\,k/\log k]$, so
$\log r_{0}=\log(k/\log k)+\log(1-O(\log k/k))=\log k-\log\log k-O(\log k/k)$.
Hence $(\log r_{0})/k=O(\log k/k)=o(1)$ and
\[
r_{0}^{(k-1)/k}\;=\;r_{0}\cdot e^{-(\log r_{0})/k}
\;=\;r_{0}\cdot(1-O(\log k/k))
\;=\;\frac{k}{\log k}\,(1-O(\log k/k))
\;=\;\frac{k}{\log k}\,(1-o(1)),
\]
where the penultimate step uses
$r_{0}=(k/\log k)(1-O(\log k/k))$ from the floor expansion.

\smallskip
\noindent\textbf{Factor 3.} $(p^{*})^{(p^{*}-r_{0})/k}/k$. We derive a
lower bound on its logarithm, keeping all inequalities one-sided.

\emph{Lower bound on $\log p^{*}$.} By Corollary~\ref{cor:bhp},
$p^{*}\ge k-1-(k-1)^{0.525}$. Hence
\[
\frac{p^{*}}{k}\;\ge\;1-\frac{1}{k}-\frac{(k-1)^{0.525}}{k}
\;\ge\;1-k^{-0.475}-k^{-0.475}\;=\;1-2k^{-0.475},
\]
using $1/k\le k^{-0.475}$ for $k\ge 1$ and
$(k-1)^{0.525}/k\le k^{-0.475}$. For $k$ large enough that
$2k^{-0.475}\le 1/2$ (i.e., $k\ge 4^{1/0.475}\approx 18.5$, amply
satisfied under $k\ge k_{0}$), the inequality $\log(1-x)\ge -x(1+x)$
for $x\in[0,1/2]$ yields
\begin{equation}
\log p^{*}\;=\;\log k+\log(p^{*}/k)
\;\ge\;\log k-2k^{-0.475}(1+2k^{-0.475})
\;\ge\;\log k-4k^{-0.475}.
\label{eq:logpstar}
\end{equation}

\emph{Lower bound on $(p^{*}-r_{0})/k$.} Using
$p^{*}\ge k-1-(k-1)^{0.525}$ and $r_{0}\le k/\log k$:
\begin{equation}
\frac{p^{*}-r_{0}}{k}
\;\ge\;1-\frac{1}{k}-\frac{(k-1)^{0.525}}{k}-\frac{1}{\log k}
\;\ge\;1-\frac{1}{\log k}-2k^{-0.475}.
\label{eq:pratiostar}
\end{equation}
Both this lower bound and $\log p^{*}\ge\log k-4k^{-0.475}$ are
positive for $k\ge k_{0}\ge 10^{4}$: numerically,
$1-1/\log 10^{4}-2\cdot(10^{4})^{-0.475}\ge 1-0.109-0.026>0.86$ and
$\log 10^{4}-4\cdot(10^{4})^{-0.475}\ge 9.21-0.06>9.15$, and both
quantities are increasing in $k$ on $[10^{4},\infty)$.

\emph{Combining.} Multiplying the two positive lower
bounds~\eqref{eq:logpstar} and~\eqref{eq:pratiostar}:
\[
\frac{p^{*}-r_{0}}{k}\cdot\log p^{*}
\;\ge\;\Bigl(1-\tfrac{1}{\log k}-2k^{-0.475}\Bigr)\bigl(\log k-4k^{-0.475}\bigr).
\]
Expanding:
\[
=\log k-1-2k^{-0.475}\log k-4k^{-0.475}+\frac{4k^{-0.475}}{\log k}+8k^{-0.95}.
\]
The dominant negative term is $-2k^{-0.475}\log k$; the remaining
terms are each $O(k^{-0.475})$ or smaller. Hence for some absolute
constant $C_{1}>0$ and all $k\ge k_{0}$,
\begin{equation}
\frac{p^{*}-r_{0}}{k}\cdot\log p^{*}\;\ge\;\log k-1-C_{1}\,k^{-0.475}\log k.
\label{eq:product}
\end{equation}
(More precisely, $C_{1}=3$ suffices for large $k$; we do not optimize.)

\emph{Conclusion for Factor 3.} Subtracting $\log k$ from both sides
of~\eqref{eq:product}:
\[
\log(\text{Factor 3})\;=\;\frac{p^{*}-r_{0}}{k}\log p^{*}-\log k
\;\ge\;-1-C_{1}\,k^{-0.475}\log k.
\]
Exponentiating and using $e^{-x}\ge 1-x$ for $x\ge 0$:
\[
\text{Factor 3}\;\ge\;e^{-1}\exp\!\bigl(-C_{1}\,k^{-0.475}\log k\bigr)
\;\ge\;e^{-1}\bigl(1-C_{1}\,k^{-0.475}\log k\bigr)
\;=\;e^{-1}\bigl(1-O(k^{-0.475}\log k)\bigr).
\]

\smallskip
\noindent\textbf{Combining Factors 1, 2, 3 into~\eqref{eq:chain3}},
each as a positive lower bound of the form (leading term)$\cdot(1-o(1))$:
\[
\frac{H(k)^{1/k}}{k}
\;\ge\;\bigl(1-O(\log k/k)\bigr)\cdot\frac{k}{\log k}\bigl(1-O(\log k/k)\bigr)
\cdot\frac{1}{e}\bigl(1-O(k^{-0.475}\log k)\bigr)
\]
\[
=\;\frac{1}{e}\cdot\frac{k}{\log k}\cdot\bigl(1-O(k^{-0.475}\log k)\bigr)
\;=\;\Bigl(\frac{1}{e}-\varepsilon(k)\Bigr)\frac{k}{\log k},
\]
where
$\varepsilon(k)=O(k^{-0.475}\log k)+O(\log k/k)=O(k^{-0.475}\log k)$
(since $\log k/k\le k^{-0.475}\log k$ for $k\ge 1$). This
is~\eqref{eq:main}. Since $k/\log k\to\infty$ and
$\varepsilon(k)\to 0$, the ratio $H(k)^{1/k}/k$ diverges, proving
both Theorem~\ref{thm:main} and Corollary~\ref{cor:p190}.
\end{proof}

\subsection{Explicit \texorpdfstring{$o(1)$}{o(1)} rate}
\label{subsec:eps-rate}

We collect the relative errors in the three factors
of~\eqref{eq:chain3}:

\begin{itemize}
\item \textbf{Factor 1}, $(16k)^{-1/k}=1+O(\log k/k)$: contributes
  relative error $O(\log k/k)$.
\item \textbf{Factor 2}, $r_{0}^{(k-1)/k}/(k/\log k)=1+O(\log k/k)$:
  the relative error comes from two independent sources,
  $r_{0}^{-1/k}=e^{-(\log r_{0})/k}=1+O(\log k/k)$ (since
  $\log r_{0}=\log k-\log\log k+O(\log k/k)$ gives
  $(\log r_{0})/k=O(\log k/k)$) and the floor
  $r_{0}/(k/\log k)=1+O(\log k/k)$.
\item \textbf{Factor 3},
  $e\cdot\text{Factor 3}=e^{O(k^{-0.475}\log k)}=1+O(k^{-0.475}\log k)$:
  the relative error is $O(k^{-0.475}\log k)$ from the BHP prime gap
  (the $-1/\log k$ contribution is \emph{exact}, not an error).
\end{itemize}

Multiplying the three (each of the form $1+o(1)$) gives
\[
\varepsilon(k)\;=\;O(\log k/k)\;+\;O(k^{-0.475}\log k)
\;=\;O(k^{-0.475}\log k),
\]
where the underlying constants are \emph{absolute} (independent of
$r_{0}$ and of the specific prime $p^{*}$ in the BHP interval); in
particular they can be read off from~\eqref{eq:EL-verify},
Corollary~\ref{cor:bhp}, and the floor expansion above, but we do not
attempt to optimize them. The qualitative consequence ---
$\varepsilon(k)\to 0$ as $k\to\infty$ --- is what drives the
asymptotic~\eqref{eq:main}.

\section{Remarks and open questions}
\label{sec:remarks}

\subsection{Relation to earlier lower bounds on \texorpdfstring{$H(k)$}{H(k)}}
\label{subsec:related}

Three remarks relate the present bound to earlier lower bounds
on $H(k)$.

First, BCT~\cite[Theorem~2.2]{BCT} uses Berlekamp's~\cite{Berlekamp1968}
bound $W(2,p+1)>p\cdot 2^{p}$ as a base and obtains
$W(r,p+1)>p^{r-1}\cdot 2^{p}$, which after the pigeonhole reduction
$H(k)\ge W(k-1,k)$ gives only $W(k-1,k)^{1/k}/k\to 2$. Iterating from
Berlekamp's base thus yields a constant rather than a divergent
ratio, and the need to switch to a different base is not immediate.

Second, Hunter~\cite{Hunter2025} proves an exponential improvement on
multicolor van der Waerden numbers in the regime of fixed~$r$ with
$k\to\infty$, obtaining $W(k;r)>(a\cdot 3^{b})^{(1-o_{r}(1))k}$ for
$r=a+3b$ with $a\in\{2,3,4\}$. In that fixed-$r$ regime, the
Erd\H{o}s--Lov\'asz bound $r^{k-1}/(4k)$ is no better than several
classical alternatives, and there is no obvious reason to prefer it
over Berlekamp or Hunter's own construction. The specific route of
letting $r$ grow with $k$ seems not to have been pursued in the
published literature.

Third, the substitution $r_{0}=\lfloor k/\log k\rfloor$ makes the
Erd\H{o}s--Lov\'asz base $r_{0}^{k-1}/(8k)$ super-exponentially
larger than $k\cdot 2^{k}$; composed with the BCT factor
$(p^{*})^{p^{*}-r_{0}}$, this replaces the constant~$2$ by the
polynomial rate $k/(e\log k)$. The decisive choice is to let the
color count in the EL base grow with $k$.

In particular, the simplest alternative --- direct application of
Theorem~\ref{thm:EL} at $r=k-1$ with no BCT --- gives only
$\bigl((k-1)^{k-1}/(16k)\bigr)^{1/k}/k\to 1$ from below, which is a
bounded ratio and does not yield the divergence of $H(k)^{1/k}/k$.
The improvement here is precisely to apply EL at the growing base
$r=r_{0}$ and then iterate BCT over the $p^{*}-r_{0}\approx
k(1-1/\log k)$ steps, which multiplies the $k$-th root of the EL
lower bound by a factor of $\approx k/e$.

\subsection{What the method gives as a function of
\texorpdfstring{$r_{0}$}{r0}}
\label{subsec:hard-limit}

For a general choice of $r_{0}=r_{0}(k)$ (with $2\le r_{0}\le p^{*}$),
the same computation as in Section~\ref{subsec:asymp} yields, up to
lower-order terms,
\begin{equation}
\frac{H(k)^{1/k}}{k}\;\gtrsim\;R(r_{0})\;:=\;r_{0}\cdot\exp\!\Bigl(-\frac{r_{0}\log k}{k}\Bigr).
\label{eq:Rr0}
\end{equation}
(Substituting $r_{0}$ into~\eqref{eq:chain2}--\eqref{eq:chain3} and
taking logs gives
$\log(H(k)^{1/k}/k)\ge -r_{0}\log k/k+\log r_{0}+o(1)$; exponentiating
produces~\eqref{eq:Rr0}.)

Optimizing~\eqref{eq:Rr0} over $r_{0}$: setting the derivative of
$\log r_{0}-r_{0}\log k/k$ with respect to $r_{0}$ to zero gives
$1/r_{0}-\log k/k=0$, i.e., $r_{0}^{\mathrm{opt}}=k/\log k$, and
substituting back,
\[
R(r_{0}^{\mathrm{opt}})\;=\;\frac{k}{\log k}\cdot e^{-1}\;=\;\frac{k}{e\log k},
\]
which is the constant $1/e$ in Theorem~\ref{thm:main}.

\medskip
More generally:

\begin{itemize}
\item If $r_{0}=x\cdot k/\log k$ for a fixed constant $x>0$, then
  $R(r_{0})=xe^{-x}\cdot k/\log k$, so the method still gives the
  $k/\log k$ order with a worse constant $xe^{-x}\le e^{-1}$
  (equality iff $x=1$).
\item If $r_{0}=o(k/\log k)$, then $r_{0}\log k/k\to 0$, so
  $\exp(-r_{0}\log k/k)=1-o(1)$ and $R(r_{0})\sim r_{0}$. The
  resulting rate is still divergent (as long as $r_{0}\to\infty$)
  but is $o(k/\log k)$. For instance, $r_{0}=\sqrt{k}$ gives
  $R(\sqrt{k})\sim\sqrt{k}$ (not $\sqrt{k}/\log k$).
\item If $r_{0}=\omega(k/\log k)$ but $r_{0}=o(k)$, then
  $u:=r_{0}\log k/k\to\infty$ while $\log r_{0}<\log k$, and
  $\log R(r_{0})=\log r_{0}-u$ still tends to $+\infty$; for
  instance, $r_{0}=(k/\log k)\log\log k$ yields
  $R=k\log\log k/(\log k)^{2}\to\infty$, which is smaller than
  $k/(e\log k)$ by a factor $e\log\log k/\log k\to 0$.
\item If $r_{0}=c\,k$ for a fixed constant $0<c<1$, then
  $R(c k)=c k\cdot k^{-c}=c\,k^{1-c}\to\infty$. However, if
  $r_{0}=k-o\bigl(k/\log k\bigr)$, then
  $r_{0}\log k/k=\log k-o(1)$, so
  $R(r_{0})=r_{0}\,k^{-1+o(1/\log k)}=O(1)$ and the lower bound is
  bounded. In particular, pushing $r_{0}$ arbitrarily close to $p^{*}$
  defeats the method.
\end{itemize}
In summary: divergence of $R(r_{0})$ is governed by
$\log R(r_{0})=\log r_{0}-r_{0}\log k/k$ and fails precisely when the
second term exceeds $\log r_{0}$ by a constant. Among $r_{0}$ with
$r_{0}\to\infty$ the unique optimizer is $r_{0}=k/\log k$, which gives
the constant $1/e$ in the order $k/\log k$.

\medskip
The Erd\H{o}s--Graham conjectured divergence
$H(k)^{1/k}/k\to\infty$ is qualitative only; no matching upper bound
on $H(k)^{1/k}/k$ is known. In particular, our lower-bound rate
$\Omega(k/\log k)$ does not by itself determine whether
$H(k)^{1/k}/k=\Theta(k/\log k)$, polynomially larger, or in between.
Settling the asymptotic rate remains open.

\subsection{Open questions}
\label{subsec:open}

\begin{enumerate}
\item \textbf{Upper bound on $H(k)$.} No concrete asymptotic upper
  bound on $H(k)$, or on $H(k)^{1/k}/k$ from above, is currently
  known. Improving this --- for instance, via sharper control of
  $\aw([n],k)$, cf.~\cite{BEHH2016} --- remains open.
\item \textbf{Sharpness of $k/(e\log k)$.} Is the constant $1/e$
  tight within methods of this type? Within the present method the
  constant $1/e$ is sharp: by the optimization in
  \S\ref{subsec:hard-limit}, $R(r_{0})/(k/\log k)$ attains its
  maximum $1/e$ at $r_{0}=k/\log k$, so no choice of $r_{0}$ can do
  better than $(e^{-1}+o(1))k/\log k$ via this EL\,+\,BCT\,+\,BHP
  chain. A larger lower-order contribution would require either a
  sharper EL base, a prime-gap exponent below $0.525$, or a
  replacement for the BCT step.
\item \textbf{Hunter-route refinement.} A separate, conditional route
  via a uniform extension of Hunter 2025~\cite{Hunter2025} gives the
  (weaker) lower bound
  $H(k)^{1/k}/k\ge(\log k)^{c_{0}/(2c^{*})-o(1)}$ for any fixed
  $c_{0}\in(0,c^{*})$, $c^{*}=3/(2\log 3)$. This is strictly
  dominated by the present $\Omega(k/\log k)$ rate, but provides an
  independent cross-check and may be of separate interest for
  multi-color Ramsey asymptotics.
\end{enumerate}

\subsection{An effective form}
\label{subsec:non-asymp}

For concrete numerical control, the $o(1)$ in Theorem~\ref{thm:main}
can be traced to absolute constants: each factor in
Section~\ref{subsec:asymp} contributes a $1+O(\log k/k)$ or
$1+O(k^{-0.475}\log k)$ relative error with a constant derivable from
\eqref{eq:EL-verify}, Corollary~\ref{cor:bhp}, and the floor
expansion. Assembling these yields, for all
$k\ge\max(k_{\mathrm{BHP}},K(C))$ (with $K(C)$ depending on a
cumulative constant $C$),
\[
\frac{H(k)^{1/k}}{k}\;\ge\;\Bigl(e^{-1}-C\,k^{-0.475}\log k-C'\,\tfrac{\log k}{k}\Bigr)\tfrac{k}{\log k}.
\]
We do not attempt to optimize $(C,C',K(C))$. An effective form of
BHP~\cite{BHP} would render $k_{\mathrm{BHP}}$ explicit but is not
needed for Theorem~\ref{thm:main}.


\end{document}